\newtheorem{theorem}{Theorem}[section]
\newtheorem{lemma}[theorem]{Lemma}
\newtheorem{proposition}[theorem]{Proposition}
\theoremstyle{definition}
\newlength{\Oldarrayrulewidth}
\newcommand{\N}{\mathbb{N}}
\renewcommand{\mod}[2]{\equiv#1\textup{ (mod }#2\textup{)}}
\begin{document}

\title{Every Arithmetic Progression Contains Infinitely Many $b$-Niven Numbers}
\author[1]{Joshua~Harrington\thanks{joshua.harrington@cedarcrest.edu}}
\author[2]{Matthew~Litman\thanks{mclitman@ucdavis.edu.edu}}
\author[3]{Tony~W.~H.~Wong\thanks{wong@kutztown.edu}}
\affil[1]{Department of Mathematics, Cedar Crest College}
\affil[2]{Department of Mathematics, University of California Davis}
\affil[3]{Department of Mathematics, Kutztown University of Pennsylvania}
\date{\today}

\maketitle

\begin{abstract}
For an integer $b\geq 2$, a positive integer is called a $b$-Niven number if it is a multiple of the sum of the digits in its base-$b$ representation.  In this article, we show that every arithmetic progression contains infinitely many $b$-Niven numbers.

\textit{MSC:} 11A63, 11B25.\\
\textit{Keywords:} Niven, Harshad, Arithmetic Progression.
\end{abstract}

\section{Introduction}
Let $\N$ denote the set of positive integers and let $b\geq2$ be an integer. For all $n\in\N$ and $0\leq i\leq\lfloor\log_bn\rfloor$, let $\nu_b(n,i)$ be nonnegative integers such that $\nu_b(n,i)\leq b-1$ and $n=\sum_{i=0}^{\lfloor\log_bn\rfloor}\nu_b(n,i)b^i$. In other words, $\nu_b(n,i)$ is the $(i+1)$-st digit from the right in the base-$b$ representation of $n$. Furthermore, define $s_b:\N\to\N$ by $s_b(n)=\sum_{i=0}^{\lfloor\log_bn\rfloor}\nu_b(n,i)$. A positive integer $n$ is \emph{$b$-Niven} if $s_b(n)\mid n$. 

It was shown in 1993 by Cooper and Kennedy \cite{ck} that there are no 21 consecutive $10$-Niven numbers.  Their result was generalized in 1994 by Grundman \cite{g}, who showed that there are no $2b+1$ consecutive $b$-Niven numbers. In 1994, Wilson \cite{w} proved that for each $b$, there are infinitely many occurrences of $2b$ consecutive $b$-Niven numbers. These results were recently extended by Grundman, Harrington, and Wong \cite{ghw}, who investigated the maximum lengths of arithmetic progressions of $b$-Niven numbers.

In this article, we prove that every arithmetic progression contains infinitely many $b$-Niven numbers.

\section{Main Results}
The following lemma is sometimes referred to as the ``postage stamp theorem," the ``chicken McNugget theorem," or ``Frobenius' coin theorem."

\begin{lemma}\label{lem:frobenius}
Let $u$ and $v$ be integers with $uv\geq0$ and $\gcd(u,v)=1$. Then every integer $w$ such that $w$ shares the same sign with $u$ and $v$ and satisfies $|w|\geq(|u|-1)(|v|-1)$ can be written in the form $w=gu+hv$ for some nonnegative integers $g$ and $h$.
\end{lemma}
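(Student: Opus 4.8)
The plan is to reduce to the classical case in which $u$ and $v$ are both positive, and then run the standard ``complete residue system modulo $u$'' argument.

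First I would dispose of the degenerate cases. If $u=0$, then $\gcd(u,v)=|v|=1$ forces $v=\pm1$; here $(|u|-1)(|v|-1)=0$, so any $w$ with the same sign as $v$ can be written as $w=0\cdot u+(vw)v$ with $vw=|w|\geq0$, and the case $v=0$ is symmetric. So I may assume $u,v\neq0$, and since $uv\geq0$ they share a common sign. If both are negative, I would replace $(u,v,w)$ by $(-u,-v,-w)$: the hypotheses are preserved, and a representation $-w=g(-u)+h(-v)$ with $g,h\geq0$ yields $w=gu+hv$. Hence it suffices to treat the case $u,v,w>0$ with $\gcd(u,v)=1$ and $w\geq(u-1)(v-1)$.

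For that case, since $\gcd(u,v)=1$ the integers $0\cdot v,1\cdot v,\dots,(u-1)v$ form a complete residue system modulo $u$, so there is a unique $h\in\{0,1,\dots,u-1\}$ with $hv\equiv w\pmod u$. Setting $g=(w-hv)/u\in\Z$, it remains only to check $g\geq0$. Using $w\geq(u-1)(v-1)$ and $h\leq u-1$,
\[
gu=w-hv\geq(u-1)(v-1)-(u-1)v=-(u-1)>-u,
\]
so $g>-1$; since $g$ is an integer, $g\geq0$, and $w=gu+hv$ is the desired representation.

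I do not expect a genuine obstacle here: the result is classical (it is equivalent to the statement that the Frobenius number of two coprime positive integers $u,v$ is $uv-u-v$). The only points requiring care are the sign bookkeeping in the reduction above and the off-by-one estimate $gu>-u$, which is exactly what forces $g\geq0$ rather than merely $g\geq-1$.
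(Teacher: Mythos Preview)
Your argument is correct and complete; the reduction to positive $u,v$ and the residue-system argument with the inequality $gu>-u$ are exactly the standard proof of the two-coin Frobenius bound. The paper itself does not prove this lemma at all: it simply states it as the well-known ``postage stamp'' (or ``chicken McNugget'' / Frobenius coin) theorem, so there is nothing to compare against beyond noting that you have supplied a valid proof where the paper only cites the result.
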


The following two lemmas, which will be useful in our proof, are easy exercise in elementary number theory.

\begin{lemma}\label{lem:d|b-1}
If $d\mid b-1$, then for all $u\in\N$, we have $d\mid u$ if and only if $d\mid s_b(u)$.
\end{lemma}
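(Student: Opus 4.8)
The plan is to exploit the single congruence $b \equiv 1 \pmod{b-1}$ and reduce the base-$b$ digit expansion of $u$ modulo $b-1$. First I would note that $b^i \equiv 1 \pmod{b-1}$ for every nonnegative integer $i$, which is immediate by induction on $i$ (or by raising $b \equiv 1 \pmod{b-1}$ to the $i$-th power). Applying this to the expansion $u = \sum_{i=0}^{\lfloor\log_b u\rfloor} \nu_b(u,i) b^i$ recorded in the introduction and reducing term by term gives
$$u \equiv \sum_{i=0}^{\lfloor\log_b u\rfloor} \nu_b(u,i) = s_b(u) \pmod{b-1},$$
so in particular $b-1 \mid u - s_b(u)$.

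Since $d \mid b-1$ by hypothesis, it follows that $d \mid u - s_b(u)$ as well, i.e.\ $u \equiv s_b(u) \pmod d$. The desired equivalence is then immediate in both directions: if $d \mid u$, then since $d \mid u - s_b(u)$ we get $d \mid u - (u - s_b(u)) = s_b(u)$; symmetrically, if $d \mid s_b(u)$, then $d \mid s_b(u) + (u - s_b(u)) = u$.

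I do not anticipate any genuine obstacle here — this is essentially the familiar ``casting out nines'' argument. The only points worth a moment's care are the degenerate case $b = 2$ (where $d = 1$ and both divisibility conditions hold trivially) and checking that the index range $0 \le i \le \lfloor\log_b u\rfloor$ used above literally matches the definition of $s_b$ from the start of the paper, so that the term-by-term reduction is unambiguously legitimate.
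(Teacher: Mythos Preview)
Your proof is correct; the paper itself does not supply a proof for this lemma, merely labeling it an ``easy exercise in elementary number theory.'' Your argument via the congruence $u\equiv s_b(u)\pmod{b-1}$ is exactly the standard one, and indeed the paper later invokes this same congruence explicitly in the proof of Lemma~\ref{lem:carry}.
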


\begin{lemma}\label{lem:boundsb}
For all integers $2\leq n'\leq n$,
\begin{equation*}
s_b(n')\leq(b-1)\lceil\log_b(n)\rceil.
\end{equation*}
\end{lemma}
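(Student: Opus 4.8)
The plan is to bound the digit sum of $n'$ by the product of the maximum possible digit value $b-1$ and the number of base-$b$ digits of $n'$, and then to show that this digit count is controlled by $\lceil\log_b(n)\rceil$. The only subtlety is a single boundary case, which I would isolate and dispatch by hand.

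First I would set $k=\lceil\log_b(n)\rceil$ and record two elementary facts: since $2\leq n'\leq n$ and $b\geq2$, we have $\log_b(n)\geq\log_b(2)>0$, so $k\geq1$; and since $k\geq\log_b(n)$, we have $b^k\geq n\geq n'$. Thus every value of $n'$ under consideration satisfies $n'\leq b^k$.

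Next I would split on the size of $n'$ relative to $b^k$. If $n'\leq b^k-1$, then $n'$ has at most $k$ base-$b$ digits (a number with exactly $j$ digits satisfies $b^{j-1}\leq n'$, forcing $j\leq k$), and since each digit $\nu_b(n',i)$ is at most $b-1$, summing over the digits gives $s_b(n')\leq(b-1)k=(b-1)\lceil\log_b(n)\rceil$, as desired.

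The remaining case, which is the one genuine obstacle since here the crude ``$(b-1)\times(\text{digit count})$'' estimate would overshoot, is $n'=b^k$. This can occur only when $n=b^k$ as well, because $n'\leq n\leq b^k$. Here $n'$ is a $1$ followed by $k$ zeros, so $s_b(n')=1$, and since $b\geq2$ and $k\geq1$ we have $1\leq(b-1)k=(b-1)\lceil\log_b(n)\rceil$, completing the argument. I expect the write-up to be short; the point to get right is that the per-digit bound is wasteful exactly at the powers of $b$, which is precisely where the ceiling in the statement (rather than a floor-plus-one) rescues the inequality.
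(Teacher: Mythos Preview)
Your argument is correct. The paper does not supply a proof of this lemma; it simply declares both Lemma~\ref{lem:d|b-1} and Lemma~\ref{lem:boundsb} to be ``easy exercise[s] in elementary number theory'' and moves on. Your write-up fills that gap cleanly: bounding the digit sum by $(b-1)$ times the number of base-$b$ digits, controlling the digit count by $k=\lceil\log_b n\rceil$ via $n'\leq b^k$, and handling the boundary case $n'=b^k$ directly is exactly the kind of short verification the authors had in mind. There is nothing to compare against, so no further comment is needed.
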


For positive integers $m$ and $r$, let
$$\mathcal{S}_{m,r}=\{mx+r:x\in\N\}.$$

\begin{proposition}\label{prop:existence}
Let $d=\gcd(s_b(m),s_b(r),b-1)$. If $\gcd(s_b(m),s_b(r))=d$, then $\mathcal{S}_{m,r}$ contains at least one $b$-Niven number.
\end{proposition}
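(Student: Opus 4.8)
Write $a=s_b(m)$ and $c=s_b(r)$, so $d=\gcd(a,c)$, and the hypothesis says precisely that $d\mid b-1$; by Lemma~\ref{lem:d|b-1} it then also gives $d\mid m$ and $d\mid r$.

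The plan is to construct the required $b$-Niven number in a rigid two-block form. Suppose $S$ is a large positive integer with $\gcd(S,m)\mid r$; put $M=\operatorname{lcm}(m,S)$ and let $\rho$ be the element of $\{0,1,\dots,M-1\}$ with $\rho\equiv0\pmod S$ and $\rho\equiv r\pmod m$ (it exists since $\gcd(S,m)\mid r$). Then look for
\[
y=\rho+M\cdot W,
\]
with $W\in\N$ chosen so that every nonzero base-$b$ digit of $MW$ lies in a position exceeding $\lfloor\log_b M\rfloor$ (hence $\rho$ and $MW$ occupy disjoint digit positions) and $s_b(MW)=S-s_b(\rho)$. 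Any such $y$ works: $y\equiv\rho\equiv r\pmod m$, so $y\in\mathcal{S}_{m,r}$ once $W$ is large; $S\mid\rho$ and $S\mid M\mid MW$, so $S\mid y$; and $s_b(y)=s_b(\rho)+s_b(MW)=S$; therefore $s_b(y)=S\mid y$. (Letting $W$ range gives infinitely many such $y$.)

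Two things must be checked. First, for a fixed $N\in\N$ one needs to know which positive integers arise as $s_b(Nk)$. The answer: writing $e_N=\gcd\{s_b(Nk):k\in\N\}$, every sufficiently large multiple of $e_N$ is such a digit sum. Indeed, fix $k_1,\dots,k_s$ with $\gcd_i s_b(Nk_i)=e_N$; laying down $g_i$ far-apart copies of the base-$b$ expansion of $Nk_i$ produces a multiple of $N$ of digit sum $\sum_i g_i s_b(Nk_i)$, and by repeated application of Lemma~\ref{lem:frobenius} these sums attain every large multiple of $e_N$. Combined with the estimate $s_b(\rho)\le(b-1)\lceil\log_b M\rceil$ of Lemma~\ref{lem:boundsb} (negligible next to $S$), this shows a suitable $W$ exists precisely when $S-s_b(\rho)$ is a sufficiently large multiple of $e_M$.

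Second — this is where the hypothesis is used, and where I expect the real work lies — one must exhibit arbitrarily large $S$ with $\gcd(S,m)\mid r$ and $e_M\mid S-s_b(\rho)$, where $M=\operatorname{lcm}(m,S)$ and $\rho$ is built from $S$. The condition $\gcd(S,m)\mid r$ only restricts $S$ at the (finitely many) primes $p$ with $v_p(m)>v_p(r)$, and since $s_b(\rho)\equiv\rho\equiv r\equiv c$ modulo $\gcd(m,b-1)$ while $r\equiv c\pmod{b-1}$, the congruence $e_M\mid S-s_b(\rho)$ can be reduced to prescribing $S$ in a residue class modulo divisors of $b-1$ and of $m$; here the facts $d\mid b-1$, $d\mid m$, $d\mid r$ make the various requirements mutually compatible — the only obstruction that could arise is a prime dividing $\gcd(a,c)$ but not $b-1$, and the hypothesis forbids that. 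The subtle point is the self-reference of $M$ (and hence $e_M$) on $S$; handling it — for instance by first fixing the part of $m$ built from primes dividing $b$, and then choosing $S$ carefully enough that the relevant invariants of $\operatorname{lcm}(m,S)$ become constant or at least controlled along the admissible residue class — is, I believe, the technical heart of the argument.
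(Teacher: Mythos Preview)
Your write-up is an honest proof \emph{plan}, not a proof: you explicitly defer what you call the ``technical heart'' --- producing a large $S$ with $\gcd(S,m)\mid r$ and $e_M\mid S-s_b(\rho)$, where $M=\operatorname{lcm}(m,S)$ and $\rho$ are themselves built from $S$ --- and offer only a heuristic for why the resulting system of congruences should be consistent. Until that step is actually carried out, nothing has been proved. The circularity is worse than you let on: even if one grants (as is true, and essentially what Proposition~\ref{prop:multiple} establishes) that $e_N=\gcd(N,b-1)$ and that every large multiple of $e_N$ occurs as some $s_b(Nk)$, you still have $e_M=\gcd(\operatorname{lcm}(m,S),b-1)$ and $s_b(\rho)$ both varying with $S$, so ``prescribing $S$ in a residue class'' does not obviously stabilise them.

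The paper sidesteps this self-reference by working from the other direction. Instead of fixing a target digit sum $S$ and hoping to realise it, it first uses Dirichlet to choose a large prime $p$ of the shape $\frac{s_b(m)}{d}k+\frac{s_b(r)}{d}$, so that $dp=s_b(m)\,k+s_b(r)$, and then builds an $x$ with $s_b(x)=k$ whose nonzero base-$b$ digits are spaced far enough apart (at gaps which are multiples of $\omega$, a multiple of $p-1$) that multiplying by $m$ and adding $r$ produces no carries; this forces $s_b(mx+r)=s_b(m)\cdot k+s_b(r)=dp$. The spacing $b^{\omega}\equiv 1\pmod p$ simultaneously lets one control $x\pmod p$ and arrange $x\equiv -m^{-1}r\pmod p$, giving $p\mid mx+r$; together with $d\mid m$, $d\mid r$, and $\gcd(d,p)=1$ this yields $dp\mid mx+r$. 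The decisive idea you are missing is to make the digit sum equal $d$ times a \emph{prime} chosen in advance, which decouples the divisibility target from the construction and eliminates the feedback loop you ran into.
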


\begin{proof}
Let $k_0(b,m,r)\in\N$ such that for all integers $k\geq k_0$, 
\begin{equation}\label{eqn:k0def}
k\geq(b-1)\left\lceil\log_b\left(\frac{s_b(m)}{d}\cdot k+\frac{s_b(r)}{d}\right)\right\rceil+(b-2)\left((b-1)\left\lceil\log_b\left(\frac{s_b(m)}{d}\cdot k+\frac{s_b(r)}{d}\right)\right\rceil-1\right).
\end{equation}
Note that $k_0$ is well-defined since $b,m,r$ are constants and the right hand side of equation~\eqref{eqn:k0def} is of order $O(\log k)$. Using Dirichlet's theorem on primes in arithmetic progressions, let $k\in\N$ be such that $k\geq\max\{k_0,b,m\}$ and $p=\frac{s_b(m)}{d}\cdot k+\frac{s_b(r)}{d}$ is a prime. Since $p>k\geq\max\{b,m\}$, we have $p\nmid bm$. Furthermore, let $\tilde{x}$ be the smallest positive integer such that $\tilde{x}\mod{-m^{-1}r}{p}$. Due to Lemma~\ref{lem:boundsb} and equation~\eqref{eqn:k0def}, we have
$$k\geq s_b(\tilde{x})+(b-2)(s_b(p)-1).$$
By Lemma~\ref{lem:frobenius}, there exist nonnegative integers $g$ and $h$ such that
$$k=s_b(\tilde{x})+g(b-1)+h\cdot s_b(p).$$

Let $\omega\in\N$ be a multiple of $p-1$ such that $b^\omega>\max\{m,r\}$. Note that $b^\omega\mod{1}{p}$ by Fermat's little theorem since $p\nmid b$. We now define a function $\tau_b:\N\to\N$ as follows. For each fixed $n\in\N$, let $\sigma_{-1}=0$ and $\sigma_i=\sum_{j=0}^i\nu_b(n,j)$ for $0\leq i\leq\lfloor\log_bn\rfloor$. Then
$$\tau_b(n)=\sum_{j=1}^{\sigma_{\lfloor\log_bn\rfloor}}b^{j\omega+\ell_j},$$
where $\ell_j=i$ for the unique $i\in\{0,1,2,\dotsc,\lfloor\log_bn\rfloor\}$ satisfying $\sigma_{i-1}<j\leq\sigma_i$. It is important to notice that the construction of $\tau_b(n)$ guarantees $s_b(\tau_b(n))=\sigma_{\lfloor\log_bn\rfloor}=s_b(n)$ and $\tau_b(n)\equiv\sum_{j=1}^{\sigma_{\lfloor\log_bn\rfloor}}b^{\ell_j}\equiv\sum_{i=0}^{\lfloor\log_bn\rfloor}\nu_b(n,i)b^i\mod{n}{p}$.

Let $x_0=\tau_b(\tilde{x})$, and for each positive integer $t\leq g$, let
$$x_t=x_{t-1}-b^{\lfloor\log_bx_{t-1}\rfloor}+\sum_{\iota=1}^bb^{\iota\omega+\lfloor\log_bx_{t-1}\rfloor-1}.$$
From this construction, we have $s_b(x_t)=s_b(x_{t-1})+b-1$ and 
$$x_t\equiv x_{t-1}-b^{\lfloor\log_bx_{t-1}\rfloor}+b\cdot b^{\lfloor\log_bx_{t-1}\rfloor-1}\mod{x_{t-1}}{p}$$
for all $t\leq g$, thus $s_b(x_g)=s_b(x_0)+g(b-1)=s_b(\tilde{x})+g(b-1)$ and $x_g\equiv x_0\mod{\tilde{x}}{p}$. Lastly, let $\alpha$ and $\beta$ be integers such that $b^{\alpha\omega}>x_g$ and $b^{\beta\omega}>\tau_b(p)$, and we define
$$x=x_g+\sum_{\iota=0}^{h-1}\tau_b(p)\cdot b^{(\iota\beta+\alpha)\omega}.$$
Now, $s_b(x)=s_b(x_g)+h\cdot s_b(\tau_b(p))=k$, $x\equiv x_g+\sum_{\iota=0}^{h-1}p\cdot b^{(\iota\beta+\alpha)\omega}\mod{-m^{-1}r}{p}$, and since every summand of $x$ is a distinct power of $b$ where the powers differ by at least $\omega$, we have $s_b(mx+r)=s_b(m)\cdot s_b(x)+s_b(r)=s_b(m)\cdot k+s_b(r)=dp$. Therefore, $mx+r$ is a $b$-Niven number due to the following observations.
\begin{itemize}
\item $mx+r\equiv m(-m^{-1}r)+r\mod{0}{p}$,
\item $d\mid(mx+r)$ since $d\mid m$ and $d\mid r$ by Lemma~\ref{lem:d|b-1}, and
\item $\gcd(p,d)=1$ since 
$p>b$ and $d\mid b-1$.
\end{itemize}
\end{proof}

\begin{lemma}\label{lem:carry}
Let $n$ be a nonnegative integer. For all nonnegative integers $y$, $s_b(yn)=ys_b(n)+z(b-1)$ for some integer $z$.
\end{lemma}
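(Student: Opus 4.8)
The plan is to reduce everything modulo $b-1$. The crucial elementary observation is that $b\equiv 1\pmod{b-1}$, hence $b^i\equiv 1\pmod{b-1}$ for every $i\geq 0$, and therefore, reducing the base-$b$ expansion $u=\sum_{i}\nu_b(u,i)b^i$ modulo $b-1$, we get $s_b(u)\mod{u}{b-1}$ for every $u\in\N$. This is the standard ``casting out nines'' congruence and is essentially the content underlying Lemma~\ref{lem:d|b-1}; I would either invoke that lemma's proof or simply reprove the one-line congruence, since it is exactly what is needed here.

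First I would dispose of the degenerate cases: if $n=0$ or $y=0$, then $yn=0$ and, interpreting $s_b(0)=0$ as the empty digit sum, both sides of the claimed identity vanish, so $z=0$ works. Hence we may assume $n\geq 1$ and $y\geq 1$, so that $yn\in\N$ and all the digit sums in sight are those of genuine positive integers.

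Next, I would apply the congruence $s_b(u)\mod{u}{b-1}$ twice: once to $u=yn$, giving $s_b(yn)\mod{yn}{b-1}$, and once to $u=n$, giving $s_b(n)\mod{n}{b-1}$; multiplying the latter by $y$ yields $y\,s_b(n)\mod{yn}{b-1}$. Subtracting the two congruences gives $s_b(yn)-y\,s_b(n)\mod{0}{b-1}$, so there is an integer $z$ with $s_b(yn)-y\,s_b(n)=z(b-1)$, which is precisely the assertion. There is no real obstacle here; the only points requiring a word of care are the justification of $s_b(u)\mod{u}{b-1}$ (immediate from $b^i\equiv 1\pmod{b-1}$) and the bookkeeping in the $n=0$ or $y=0$ cases, after which the statement follows by a two-line congruence computation.
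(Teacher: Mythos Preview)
Your proof is correct and is essentially identical to the paper's own argument: both reduce modulo $b-1$ using $s_b(u)\mod{u}{b-1}$ to conclude $s_b(yn)\equiv yn\mod{y\,s_b(n)}{b-1}$. The only difference is that you explicitly treat the degenerate cases $n=0$ and $y=0$, which the paper leaves implicit.
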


\begin{proof}
Note that for all nonnegative integers $n$, if $n=\sum_{i=0}^{\lfloor\log_bn\rfloor}\nu_b(n,i)b^i$, then $s_b(n)=\sum_{i=0}^{\lfloor\log_bn\rfloor}\nu_b(n,i)\mod{n}{b-1}$. Hence, $s_b(yn)\equiv yn\mod{ys_b(n)}{b-1}$. 
\end{proof}

\begin{proposition}\label{prop:multiple}
Let $d=\gcd(s_b(m),s_b(r),b-1)$. Then there exists a positive multiple $\overline{m}$ of $m$ such that $\gcd(s_b(\overline{m}),s_b(r))=d$.
\end{proposition}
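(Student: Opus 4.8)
The plan is to analyze the set $S\subseteq\N$ of digit sums of positive multiples of $m$ and to locate in it a value of the form $d'p$, where $d'=\gcd(m,b-1)$ and $p$ is a suitably chosen prime. The proposition then follows, since for such $p$ we have $\gcd(d'p,s_b(r))=\gcd(d',s_b(r))$, and $\gcd(d',s_b(r))=\gcd(m,b-1,s_b(r))=d$: indeed $d\mid b-1$ and $d\mid s_b(m)$ force $d\mid m$ by Lemma~\ref{lem:d|b-1}, while conversely any common divisor of $m$, $b-1$, and $s_b(r)$ also divides $s_b(m)$ and hence $d$. The same application of Lemma~\ref{lem:d|b-1} shows $\gcd(s_b(m),b-1)=d'$, and that every element of $S$ is divisible by $d'$. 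I also note that $S$ is closed under addition: if $\overline{m}_1,\overline{m}_2$ are positive multiples of $m$, then so is $\overline{m}_1 b^{L}+\overline{m}_2$, and for $L$ larger than the number of base-$b$ digits of $\overline{m}_2$ its digit sum equals $s_b(\overline{m}_1)+s_b(\overline{m}_2)$.

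The heart of the argument is to show that $(b-1)k\in S$ for all sufficiently large $k$. Let $\ell$ denote the number of base-$b$ digits of $m$, so $b^{\ell-1}\le m<b^{\ell}$. For $t>\ell$ I would use the identity
$$m(b^{t}-1)=(m-1)b^{t}+(b^{t-\ell}-1)b^{\ell}+(b^{\ell}-m),$$
in which the three summands occupy pairwise disjoint ranges of base-$b$ digit positions — namely positions $\ge t$, positions from $\ell$ to $t-1$, and positions below $\ell$ — so that
$$s_b\bigl(m(b^{t}-1)\bigr)=s_b(m-1)+(t-\ell)(b-1)+s_b(b^{\ell}-m).$$
As $t$ increases through the integers $>\ell$, this value increases by exactly $b-1$ at each step and is always a multiple of $b-1$ (being the digit sum of a multiple of $b-1$); hence $\{s_b(m(b^{t}-1)):t>\ell\}$ is exactly $\{(b-1)k:k\ge k_1\}$ for some positive integer $k_1$.

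With this in hand, apply Lemma~\ref{lem:frobenius} to $u=s_b(m)/d'$ and $v=(b-1)/d'$, which are positive integers with $\gcd(u,v)=\gcd(s_b(m),b-1)/d'=1$. Any integer $W\ge u+k_1v+(u-1)(v-1)$ has $W-u-k_1v\ge(u-1)(v-1)\ge0$, so $W-u-k_1v=g'u+h'v$ for some $g',h'\ge0$, whence $W=gu+kv$ with $g=g'+1\ge1$ and $k=h'+k_1\ge k_1$; consequently $d'W=g\,s_b(m)+k(b-1)$ lies in $S$, since $g\,s_b(m)$ is a sum of $g$ copies of $s_b(m)\in S$, $k(b-1)\in S$, and $S$ is closed under addition. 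Thus $S$ contains every multiple of $d'$ of size at least $d'\bigl(u+k_1v+(u-1)(v-1)\bigr)$. Finally, choose a prime $p$ with $p\ge u+k_1v+(u-1)(v-1)$ and $p>s_b(r)$; then $d'p\in S$, so there is a positive multiple $\overline{m}$ of $m$ with $s_b(\overline{m})=d'p$, and $\gcd(s_b(\overline{m}),s_b(r))=\gcd(d'p,s_b(r))=\gcd(d',s_b(r))=d$ because $p\nmid s_b(r)$. The step I expect to be most delicate is the digit-sum identity for $m(b^{t}-1)$: one has to confirm, using $t>\ell$, that the three blocks genuinely sit in disjoint digit positions, so that the digit sum adds blockwise and grows linearly in $t$ — and it is precisely this linear growth that makes Lemma~\ref{lem:frobenius} applicable.
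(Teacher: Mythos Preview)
Your proof is correct, and it takes a genuinely different route from the paper's.

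The paper first manufactures, through several digit-level manipulations, two specific multiples $m^*$ and $m^{**}$ of $m$ with $s_b(m^{**})=2s_b(m^*)-(b-1)$; writing $s_b(m^*)=y^*s_b(m)+z^*(b-1)$ via Lemma~\ref{lem:carry}, it then applies Lemma~\ref{lem:frobenius} to the pair $z^*,\,2z^*-1$ \emph{modulo $s_b(r)$} and concatenates suitable numbers of copies of $m^*$, $m^{**}$, and $m$ so that the resulting $\overline m$ satisfies $s_b(\overline m)\equiv d\pmod{s_b(r)}$, which together with $d\mid s_b(\overline m)$ forces $\gcd(s_b(\overline m),s_b(r))=d$.

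Your argument is more structural: you study the additive semigroup $S=\{s_b(\overline m):m\mid\overline m\}$ directly, and your key device is the identity $m(b^t-1)=(m-1)b^t+(b^{t-\ell}-1)b^{\ell}+(b^{\ell}-m)$, which cleanly exhibits $(b-1)k\in S$ for all large $k$ without any of the digit engineering used to build $m^*$ and $m^{**}$. Combining this with $s_b(m)\in S$ and Lemma~\ref{lem:frobenius} applied to $s_b(m)/d'$ and $(b-1)/d'$ shows that $S$ contains \emph{every} sufficiently large multiple of $d'=\gcd(m,b-1)=\gcd(s_b(m),b-1)$; you then pick $d'p\in S$ with $p$ a prime exceeding $s_b(r)$, so that $\gcd(d'p,s_b(r))=\gcd(d',s_b(r))=d$. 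Thus both proofs invoke Lemma~\ref{lem:frobenius}, but for different purposes: the paper uses it to solve a congruence modulo $s_b(r)$, while you use it to fill out $S$ as a numerical semigroup. Your approach buys a cleaner and essentially self-contained construction and, as a byproduct, the stronger statement that the digit sums of multiples of $m$ eventually coincide with the multiples of $d'$; the paper's approach instead lands directly on the residue class $d\bmod s_b(r)$, at the cost of the somewhat delicate construction of $m^*$ and $m^{**}$.
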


\begin{proof}
Let $i_0$ be the smallest nonnegative integer such that $\nu_b(m,i_0)\neq0$. Then there exists a nonnegative integer $a\leq b-1$ such that $\nu_b(am,i_0)=\nu_b(a\cdot\nu_b(m,i_0),0)\geq\frac{b}{2}$. Next, if $\nu_b(am,i_0+1)\neq b-1$, then let $m'=am$; otherwise, let $m'=(b+1)am$ so that $\nu_b(m',i_0)=\nu_b(am,i_0)\geq\frac{b}{2}$ and
$$\nu_b(m',i_0+1)\equiv\nu_b(am,i_0+1)+\nu_b(am,i_0)\equiv b-1+\nu_b(am,i_0)\not\mod{b-1}{b}.$$
Furthermore, define $m''$ to be a multiple of $m'$ such that the leading digit of $m''$ in base-$b$ representation is at least $\frac{b}{2}$, i.e., $\nu_b(m'',\lfloor\log_bm''\rfloor)\geq\frac{b}{2}$. Let $m^*=b^2m''+m'$. Then $m^*$ is a multiple of $m$ such that $\nu_b(m^*,i_0)\geq\frac{b}{2}$, $\nu_b(m^*,i_0+1)\neq b-1$, and $\nu_b(m^*,\lfloor\log_bm^*\rfloor)\geq\frac{b}{2}$.

Let $x,y,z$ be integers such that $xs_b(r)+ys_b(m)+z(b-1)=d$. Define $y^*$ such that $m^*=y^*m$, and let $z^*$ be an integer such that $s_b(m^*)=y^*s_b(m)+z^*(b-1)$ by Lemma~\ref{lem:carry}. Letting $m^{**}=(b^{\lfloor\log_bm^*\rfloor-i_0}+1)m^*$, we see that $\nu_b(m^{**},\lfloor\log_bm^*\rfloor)=\nu_b(m^*,\lfloor\log_bm^*\rfloor)+\nu_b(m^*,i_0)-b$ and $\nu_b(m^{**},\lfloor\log_bm^*\rfloor+1)=\nu_b(m^*,i_0+1)+1\leq b-1$. Hence, $s_b(m^{**})=2s_b(m^*)-(b-1)=2y^*s_b(m)+(2z^*-1)(b-1)$. By Lemma~\ref{lem:frobenius}, there exist nonnegative integers $g$ and $h$ such that $gz^*+h(2z^*-1)\mod{z}{s_b(r)}$. Let $j$ be a nonnegative integer such that $gy^*+h(2y^*)+j\mod{y}{s_b(r)}$. Consider
\begin{align*}
\overline{m}=&\sum_{\iota=0}^{g-1}m^*b^{\iota(\lfloor\log_bm^*\rfloor+1)}+\sum_{\iota=0}^{h-1}m^{**}b^{\iota(\lfloor\log_bm^{**}\rfloor+1)+g(\lfloor\log_bm^*\rfloor+1)}\\
&+\sum_{\iota=0}^{j-1}mb^{\iota(\lfloor\log_bm\rfloor+1)+g(\lfloor\log_bm^*\rfloor+1)+h(\lfloor\log_bm^{**}\rfloor+1)}.
\end{align*}
By construction, $\overline{m}$ is a multiple of $m$ and
\begin{align*}
s_b(\overline{m})&=gs_b(m^*)+hs_b(m^{**})+js_b(m)\\
&=g\big(y^*s_b(m)+z^*(b-1)\big)+h\big(2y^*s_b(m)+(2z^*-1)(b-1)\big)+js_b(m)\\
&=(gy^*+h(2y^*)+j)s_b(m)+(gz^*+h(2z^*-1))(b-1)\\
&\equiv ys_b(m)+z(b-1)\mod{d}{s_b(r)}.
\end{align*}
Note that $d\mid s_b(\overline{m})$ since $d\mid s_b(m)$ and $d\mid(b-1)$. Therefore, $\gcd(s_b(\overline{m}),s_b(r))=d$.

\end{proof}
Combining Propositions~\ref{prop:existence} and \ref{prop:multiple}, we obtain the following theorem.

\begin{theorem}
Let $m$ and $r$ be positive integers. The arithmetic progression $\mathcal{S}_{m,r}$ contains infinitely many $b$-Niven numbers.
\end{theorem}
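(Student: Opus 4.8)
The plan is to combine Propositions~\ref{prop:existence} and \ref{prop:multiple} to place one $b$-Niven number in $\mathcal{S}_{m,r}$, and then to bootstrap this into infinitely many by passing to shifted sub-progressions.

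First I would fix positive integers $m$ and $r$ and set $d=\gcd(s_b(m),s_b(r),b-1)$. Proposition~\ref{prop:multiple} supplies a positive multiple $\overline{m}=cm$ of $m$ with $\gcd(s_b(\overline{m}),s_b(r))=d$, and its proof also records that $d\mid s_b(\overline{m})$. Before invoking Proposition~\ref{prop:existence} on the pair $(\overline{m},r)$ I would check that its hypothesis holds: writing $d'=\gcd(s_b(\overline{m}),s_b(r),b-1)$, the divisibilities $d\mid s_b(\overline{m})$, $d\mid s_b(r)$, $d\mid b-1$ give $d\mid d'$, while $d'\mid\gcd(s_b(\overline{m}),s_b(r))=d$ gives the reverse, so $d'=d$ and hence $\gcd(s_b(\overline{m}),s_b(r))=d'$. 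Proposition~\ref{prop:existence} then produces a $b$-Niven number $n=\overline{m}x+r=m(cx)+r$, which lies in $\mathcal{S}_{m,r}$.

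To promote ``at least one'' to ``infinitely many,'' I would rerun the previous paragraph with the pair $(m,r+mN)$ in place of $(m,r)$, for each $N\in\N$. The relevant greatest common divisors and digit sums are simply recomputed for each $N$, so Propositions~\ref{prop:multiple} and \ref{prop:existence} apply unchanged and yield a $b$-Niven number $n_N\in\mathcal{S}_{m,r+mN}$. Since $\mathcal{S}_{m,r+mN}=\{m(x+N)+r:x\in\N\}\subseteq\mathcal{S}_{m,r}$, every $n_N$ is a $b$-Niven number in $\mathcal{S}_{m,r}$, and $n_N\geq m+r+mN$, so the set $\{n_N:N\in\N\}$ is unbounded, hence infinite.

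Once Propositions~\ref{prop:existence} and \ref{prop:multiple} are granted, there is essentially no obstacle; the only points needing a moment of care are the transfer of the hypothesis of Proposition~\ref{prop:existence} from $(m,r)$ to $(\overline{m},r)$ and the nesting $\mathcal{S}_{m,r+mN}\subseteq\mathcal{S}_{m,r}$ that forces the constructed numbers to grow without bound. Alternatively, infinitude can be read off directly from the proof of Proposition~\ref{prop:existence}: Dirichlet's theorem furnishes infinitely many admissible primes $p$, each giving a $b$-Niven number in $\mathcal{S}_{m,r}$ of digit sum $dp$, and distinct $p$ give distinct digit sums, hence distinct numbers.
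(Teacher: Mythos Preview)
Your proposal is correct and follows essentially the same route as the paper: combine Propositions~\ref{prop:multiple} and~\ref{prop:existence} to place one $b$-Niven number in $\mathcal{S}_{m,r}$, then pass to the shifted sub-progression $\mathcal{S}_{m,r+mN}$ to force infinitely many. Your explicit verification that $\gcd(s_b(\overline{m}),s_b(r),b-1)=d$ (which the paper asserts without spelling out) is a welcome bit of care, and your ``all $N$'' bootstrapping is an equally valid variant of the paper's inductive shift by $\eta+1$.
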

\begin{proof}
By Proposition~\ref{prop:multiple}, there exists a multiple $\overline{m}$ of $m$ such that $\gcd(s_b(\overline{m}),s_b(r),b-1)=\gcd(s_b(\overline{m}),s_b(r))$. Hence, by Proposition~\ref{prop:existence}, $\mathcal{S}_{\overline{m},r}$, and thus $\mathcal{S}_{m,r}$, contains at least one $b$-Niven number since $\mathcal{S}_{\overline{m},r}$ is a subset of $\mathcal{S}_{m,r}$. Let this $b$-Niven number be $\eta m+r$ for some nonnegative integer $\eta$. Applying the same argument on the arithmetic progression $\mathcal{S}_{m,(\eta+1)m+r}$ yields another $b$-Niven number, and our proof is complete by induction.
\end{proof}

\end{document}